\newtheorem{theo}{Theorem}
\newtheorem{cor}[theo]{Corollary}
\theoremstyle{remark}
\def \p {\mathfrak p}
\def \Ql {\mathbb Q_\ell}
\def \Gal {\operatorname{Gal}}
\def \Frob {\operatorname{Frob}}
\def \tr {\operatorname{tr}}
\def \Zar {\operatorname{Zar}}
\begin{document}

\title{Ordinary Primes for Abelian Surfaces}
\author{William F. Sawin}
\thanks{The author is supported by the National Science Foundation Graduate Research Fellowship under Grant No. DGE-1148900}

\maketitle

\begin{abstract} We compute the density of the set of ordinary primes of an abelian surface over a number field in terms of the $\ell$-adic monodromy group. Using the classification of $\ell$-adic monodromy groups of abelian surfaces by Fit\'{e}, Kedlaya, Rotger, and Sutherland, we show the density is $1$, $1/2$, or $1/4$. 

\end{abstract}

\vspace{10pt}

We study the density of the set of primes at which an abelian surface has ordinary reduction. This density is known to be positive \citep[pp.370-372]{ogus82}, and to equal $1$ when the endomorphism ring is $\mathbb Z$ \citep[Theorem 7.1]{pink98}. In this paper we completely resolve the density question for abelian surfaces, by refining the the  $\ell$-adic method of Serre, as applied by Katz to abelian surfaces and explained in \citep[pp.370-372]{ogus82}. The density is always $1$, $1/2$, or $1/4$, and we describe when each occurs.

I thank Nick Katz for helpful conversations and J.P. Serre for helpful emails.

\vspace{10pt}

Let $A$ be an abelian surface over a number field $K$. Fix a prime number $\ell$. Let $G \subseteq GSP_4 $ be the $\ell$-adic monodromy group of $A$ - by definition, the Zariski closure of the image of $\Gal(\overline{\mathbb Q}\slash K)$ inside $GSP_4$ under the map defined by the action of $\Gal(\overline{\mathbb Q}\slash K)$ on $H^1(A, \Ql)$. (Here $GSP_4$ is viewed as an algebraic group over $\Ql$.) Let $V$ be the standard representation of $GSP_4$ and let $\chi$ be the similitude character. Then we can compute the density of the ordinary primes of $A$ in terms of the action of $G$ on $ \wedge^2 V \otimes \chi^{-1}$:

\begin{theo}\label{main}  The density of the set of non-ordinary primes of $A$ is equal to the number of connected components\footnote{It does not matter whether we consider connected components of the scheme $G$ or of its geometric form $G_{\overline{\mathbb Q}_\ell}$. Because $G$ is defined to be the Zariski closure of a subset of $GSP_4(\Ql)$, each connected component of $G_{\overline{\mathbb Q}_\ell}$ contains a point of $GSP_4(\Ql)$ and hence is defined over $\Ql$.} of $G$  on which the trace of the representation $\wedge^2 V \otimes \chi^{-1}$ is a constant function, divided by the number of connected components of $G$. \end{theo}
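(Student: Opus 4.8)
The plan is to combine the classical $\ell$-adic criterion for ordinarity with a Chebotarev-type equidistribution argument, carried out one connected component of $G$ at a time. First I would recall the criterion discussed in \citep[pp.~370--372]{ogus82}: if $\mathfrak p$ is a prime of good reduction with residue field of size $q = p^f$, then $A_{\mathfrak p}$ is \emph{non}-ordinary precisely when $p$ divides the coefficient of $T^2$ in the characteristic polynomial of $\Frob_{\mathfrak p}$ on $H^1$, i.e.\ when $p \mid \tr(\Frob_{\mathfrak p} \mid \wedge^2 H^1(A,\Ql))$. Since $\wedge^2 H^1(A,\Ql)$ is $\wedge^2 V$ pulled back along the map defining $G$, and $\chi(\Frob_{\mathfrak p}) = q$, setting $\tau := \tr(\wedge^2 V \otimes \chi^{-1})$ — a conjugation-invariant regular function on $GSP_4$ — we have
\[
  \tau(\Frob_{\mathfrak p}) \;=\; \tr(\Frob_{\mathfrak p} \mid \wedge^2 H^1(A,\Ql)) / q \;\in\; \mathbb Q ,
\]
and $|\tau(\Frob_{\mathfrak p})| \le 6$ by the Weil bounds. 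Discarding the primes of bad reduction, the primes above $\ell$, and the density-zero set of primes of residue degree $\ge 2$, I may assume $q = p$; then $A_{\mathfrak p}$ is non-ordinary if and only if $\tau(\Frob_{\mathfrak p}) \in \mathbb Z \cap [-6,6]$.

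Next I would run the component dichotomy. Fix a connected component $Z$ of $G$; as $G$ is smooth, $Z$ is an irreducible $\Ql$-variety. If $\tau$ is constant on $Z$, say $\tau|_Z = c$, then $\tr(\Frob_{\mathfrak p}\mid\wedge^2 H^1) = cq$ for every good-reduction $\mathfrak p$ with $\Frob_{\mathfrak p}\in Z$; comparing two such primes of distinct residue characteristic — they exist because $Z$ is a nonempty coset of $G^\circ$, so $\{\mathfrak p : \Frob_{\mathfrak p}\in Z\}$ is infinite by Chebotarev — forces $c \in \mathbb Z$, hence $p \mid cq$, so \emph{every} such $\mathfrak p$ is non-ordinary. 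If instead $\tau$ is non-constant on $Z$, then each fibre $\{g\in Z : \tau(g) = t\}$ with $t \in \mathbb Z$, $|t|\le 6$, is a proper closed subvariety of $Z$, so their union $W_Z$ is a proper closed, conjugation-stable subscheme of $Z$, and by the previous paragraph a degree-one prime with $\Frob_{\mathfrak p}\in Z$ is non-ordinary if and only if $\Frob_{\mathfrak p}\in W_Z$.

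The crux — and the step I expect to be the main obstacle, since it is where one must pass between the $\ell$-adic group $G$ and the varying residue characteristic $p$ — is to show $\{\mathfrak p : \Frob_{\mathfrak p}\in W_Z\}$ has density zero when $\tau$ is non-constant on $Z$. Let $\Gamma = \rho_\ell(\Gal(\overline{\mathbb Q}/K)) \subseteq G(\Ql)$, which is Zariski dense by the definition of $G$; then $\Gamma \cap W_Z(\Ql)$, being a proper closed analytic subset of the compact $\ell$-adic group $\Gamma$, has Haar measure zero. Writing $\Gamma_n$ for the kernel of reduction modulo $\ell^n$ on $\Gamma$, the set $\{\mathfrak p : \Frob_{\mathfrak p}\in W_Z\}$ is contained in the set of $\mathfrak p$ whose Frobenius lies, modulo $\Gamma_n$, in the conjugation-stable image $\overline W_n$ of $\Gamma\cap W_Z(\Ql)$; by Chebotarev for the finite extension corresponding to $\Gamma/\Gamma_n$ this set has density $|\overline W_n|/[\Gamma:\Gamma_n]$, and as $n\to\infty$ these ratios decrease to the Haar measure of the closed set $\Gamma\cap W_Z(\Ql)$, which is $0$. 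Hence the density is $0$.

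Finally I would assemble the answer: up to the density-zero primes discarded above, the non-ordinary primes are exactly the $\mathfrak p$ with $\Frob_{\mathfrak p}$ in one of the components $Z$ on which $\tau$ is constant. Since $\tau$ is a class function, this set of components is stable under conjugation, hence corresponds to a conjugation-stable subset of $\pi_0(G)$, which I identify with $\Gal(L/K)$ for the corresponding finite extension $L/K$; summing the Chebotarev densities of the corresponding Frobenius classes yields exactly the number of components of $G$ on which $\tau$ is constant divided by the total number of components of $G$, which is the assertion of Theorem~\ref{main}.
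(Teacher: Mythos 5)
Your proposal follows essentially the same route as the paper: Deligne's ordinarity criterion rephrased as $a_2/p = \tr(\Frob_\p,\wedge^2 V \otimes \chi^{-1})$, the Weil bound confining this trace to $\mathbb Z \cap [-6,6]$ at non-ordinary primes, a Chebotarev/equidistribution step reducing the density to the fraction of components of $G$ on which the trace is constant, and the ``two residue characteristics'' trick to pin the constant down to an integer. The only soft spot is your unjustified assertion that $\Gamma \cap W_Z(\Ql)$ has Haar measure zero ``being a proper closed analytic subset'' --- a proper closed subset of a compact $\ell$-adic group can certainly have positive measure; what you actually need is that a Zariski-closed subset of $G$ not containing any component of $G$ meets the Zariski-dense compact group $\Gamma$ in a set of measure zero, which is exactly what the paper imports from Serre (Propositions 5.9 and 5.12 of the cited lectures) rather than re-deriving via congruence quotients as you attempt.
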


\begin{proof} The primes $\p$ of $K$ that are split over primes $p\neq \ell$ of $\mathbb Q$, such that $A$ has good reduction at $\p$, have density one. Let $\p$ be such a prime.

The characteristic polynomial of $\Frob_\p$ acting on $H^1(A,\Ql)$ has the form
\[ x^4 - a_1 x^3 + a_2 x^2  - p a_1 x + p^2 \]

\noindent for integers $a_1,a_2$. Recall that $A$ has ordinary reduction if and only if $a_2$ is not a multiple of $p$ \citep[pp.238 (IV)]{deligne69}.

By the Weil bound, $a_2$  is in the interval $[-6p,6p]$. (In fact one can show it is in the interval $[-2p,6p]$.) So $a_2$ is a multiple of $p$ if and only if it is equal to $np$ for some integer $n\in [-6,6]$. We may compute both $a_2$ and $p$ in terms of $\Frob_\p \in GSP_4(\Ql)$: the trace of $\Frob_\p$ acting on $\wedge^2 V$ is $a_2$, and, because the symplectic form comes from the Weil pairing, the action of $\Frob_p$ on the similitude character $\chi$ is multiplication by $p$. Hence:
\[ \frac{a_2}{p}  = \tr (\Frob_\p, \wedge^2 V \otimes \chi^{-1} )\]

Because $\wedge^2 V \otimes \chi^{-1}$ is an algebraic representation, the set where the trace has a given value is a Zariski closed subset, so is a closed subset in the $\ell$-adic topology. Let $Z$ be the finite union over all integers $n \in [-6,6]$ of the closed subset of $G$ where the trace is $n$. Then $Z$ is a conjugacy-invariant closed subset of $G$. Let $\Gamma$ be the image of $\Gal(\overline{\mathbb Q}\slash K)$ in $GSP_4(\Ql)$. $\Gamma$ is a closed subgroup of $GSP_4(\mathbb Q_\ell)$, hence an $\ell$-adic analytic group \citep[\S8.2 Theorem 2]{bourbakilie}. $\Gamma$ is also compact, so it has a Haar measure of total mass one. $Z\cap \Gamma $ is an analytic subset of $\Gamma$, so its boundary has measure $0$ \citep[Proposition 5.9]{serre11}. Hence, by Chebotarev's density theorem, the density of primes lying in $Z$ is equal to the Haar measure of $Z \cap \Gamma$  \citep[Corollary 6.10]{serre11}. Because $\Gamma$ is Zariski dense in $G$, the Haar measure of $Z \cap \Gamma$ is equal to the number of connected components of $G$ contained in $Z$ divided by the number of connected components of $G$ \citep[Proposition 5.12 and 5.2.1.2]{serre11}.

Thus the density of the set of non-ordinary primes is equal to the number of connected components where the trace is constant and equal to $n$ for some $n  \in [-6,6]$ divided by the number of connected components. So it is sufficient to show that on every connected component where the trace of the representation is constant, the trace is equal to one of those $13$ values. If there is a connected component where the trace is constant and equal to $c$, then by Chebotarev again, for infinitely many split primes the trace must equal $c$. At these primes $a_2$ is equal to $cp$. The coefficients of the characteristic polynomial, in particular $a_2$, are always integers, so $c$ must be a rational number whose denominator divides $p$. Because this occurs for infinitely many, hence at least two, different primes $p$, $c$ is an integer. Then because $a_2 \in [-6p,6p]$, $c\in[-6,6]$.  Therefore, the density of the set of ordinary primes is equal to the proportion of connected components where the trace is nonconstant. \end{proof}

An immediate corollary is:

\begin{cor}\label{connected} If $G$ is connected, then the set of ordinary primes has density one. \end{cor}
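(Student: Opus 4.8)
The plan is to read the corollary off Theorem~\ref{main}. If $G$ is connected it has exactly one connected component, namely $G$ itself, so Theorem~\ref{main} gives a dichotomy: the density of the set of non-ordinary primes of $A$ equals $1$ when $\tr(\wedge^2 V\otimes\chi^{-1})$ is a constant function on $G$, and equals $0$ otherwise. Thus the whole content is to rule out the first alternative, i.e.\ to show that $\tr(\wedge^2 V\otimes\chi^{-1})$ is nonconstant on a connected $G$.

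For this I would simply quote the positivity of the density of ordinary primes of an abelian surface, the result of \citet[pp.370--372]{ogus82} recalled at the start of the paper. Positivity of the density of ordinary primes means the density of the set of non-ordinary primes is strictly less than $1$; combined with the dichotomy above, this forces that density to be $0$, and hence the density of the set of ordinary primes to be $1$, as claimed. So the only ingredient beyond Theorem~\ref{main} is Ogus's positivity result, and the argument is otherwise purely formal.

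If one wants to avoid citing Ogus, the first alternative of the dichotomy can be excluded by hand. Were $\tr(\wedge^2 V\otimes\chi^{-1})$ constant on the connected group $G$, evaluation at the identity would make the constant equal to $\dim(\wedge^2 V\otimes\chi^{-1})=6$; evaluating on powers of a fixed element of $G$ then shows every element of $G$ acts unipotently on $\wedge^2 V\otimes\chi^{-1}$, and since $G$ is connected and reductive — the Galois representation on $H^1(A,\Ql)$ is semisimple by Faltings, so $V$ is a semisimple $G$-module — the action of $G$ on $\wedge^2 V\otimes\chi^{-1}$ is trivial. Using the decomposition $\wedge^2 V\cong\chi\oplus\wedge^2_0 V$ into the similitude line and the $5$-dimensional primitive summand, triviality on $\wedge^2 V\otimes\chi^{-1}$ is the same as triviality on $\wedge^2_0 V\otimes\chi^{-1}$, which is a faithful representation of $GSP_4$ modulo its centre; so $G$ would lie in the centre of $GSP_4$, i.e.\ would consist of scalars. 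But then $\Frob_\p$ would act on $H^1(A,\Ql)$ as a scalar $\alpha\in\Ql^\times$ with $(x-\alpha)^4=x^4-a_1x^3+a_2x^2-pa_1x+p^2$, forcing $\alpha^2=p$, which is impossible for the positive-density set of split good primes $\p$ at which $p$ is not a square in $\Ql$. The only real obstacle in either route is locating the right structural input: the positivity theorem in the first, or semisimplicity together with the elementary representation theory of $GSP_4$ in the second.
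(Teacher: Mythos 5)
Both of your routes are correct, but neither is the one the paper takes, so a comparison is worth making. The paper also begins by reducing (via Theorem~\ref{main}) to showing the trace of $\wedge^2 V\otimes\chi^{-1}$ is nonconstant on connected $G$, and also evaluates at the identity to get the putative constant value~$6$; but from there it argues directly with the Weil bound: $a_2=6p$ is an equality case of the triangle inequality applied to $\sum_{i<j}\alpha_i\alpha_j$ with $|\alpha_i|=\sqrt p$, which forces all four Frobenius eigenvalues to equal $\sqrt p$ (or all $-\sqrt p$), and then $a_1=\pm4\sqrt p$ fails to be an integer. That is shorter and more elementary than either of your alternatives, since it never leaves the world of the characteristic polynomial of a single Frobenius.

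Your first route, invoking Ogus's positivity of the ordinary density together with the $\{0,1\}$ dichotomy that Theorem~\ref{main} gives for connected $G$, is the slickest and is logically unimpeachable (Ogus's result is independent of this paper's machinery), but it outsources the real content to a cited theorem rather than exhibiting the nonconstancy directly. Your second route is fully self-contained and is the most structural: constancy of the trace forces, via power sums and then reductivity and semisimplicity, that $G$ acts trivially on $\wedge^2 V\otimes\chi^{-1}$, and since $\wedge^2_0 V\otimes\chi^{-1}$ is faithful on $GSP_4$ modulo centre, $G$ would be central, which the characteristic polynomial at a split prime rules out. This buys you a statement that is in principle more robust (it identifies exactly which subgroups could have constant trace, namely subgroups of the centre), at the cost of importing Faltings' semisimplicity and some representation theory of $GSP_4$ that the paper's two-line Weil-bound computation avoids. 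One small stylistic remark on the second route: having reduced to $G$ scalar, you could finish exactly as the paper does, by noting $a_1=4\alpha=\pm4\sqrt p\notin\mathbb Z$, rather than appealing to the density of primes $p$ that are non-squares in $\Ql$; both work, but the integrality argument is cleaner and closer to what is already set up.
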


\begin{proof} By Theorem \ref{main}, it is sufficient to show that the trace of $\wedge^2 V \otimes \chi^{-1}$ is not constant on $G$. Because the identity matrix is in $G$, if the trace is constant it is equal to the trace of the identity matrix, namely $6$.  Hence for every split prime $\p$, we would have $a_2=6p$. But in terms of the four eigenvalues $\alpha_1,\alpha_2,\alpha_3,\alpha_4$ of Frobenius on $H^1(A,\Ql)$, $a_2$ is the sum of the six products $\alpha_i \alpha_j$ for $i<j$. As each $|\alpha_i|=\sqrt{p}$, the only way $a_2$ can be $6p$ is if all the eigenvalues are $\sqrt{p}$ or all are $-\sqrt{p}$. But this is impossible, because then $a_1$ would be $4\sqrt{p}$ or $-4\sqrt{p}$, which is not an integer. \end{proof}

We can make Theorem \ref{main} more explicit using the classification of \citep{fkrs}, which lists all possibilities for the ``Sato-Tate group" of an abelian surface. The Sato-Tate group determines the base change of the $\ell$-adic monodromy group to $\overline{\mathbb Q}_\ell$. By applying Theorem \ref{main} to each of these monodromy groups, we obtain: 

\begin{theo}The density of the set of ordinary primes of $A$ is $1$ unless $A$ either is a CM abelian surface, or is isogenous to the product of a CM elliptic curve and a non-CM elliptic curve, or is isogenous to the product of CM elliptic curves. In these cases, the density of the set of ordinary primes is:

\begin{itemize}

\item If $A$ is a CM abelian surface and $F$ is the smallest field that all the endomorphisms of $A$ are defined over, the density is:

\[ \frac{1}{[F:K]}\]

\item If $A$ is isogenous to the product of a CM elliptic curve and a non-CM elliptic curve, the density is $1$ if the CM field is contained in $K$ and $1/2$ otherwise.

\item If $A$ is isogenous to the product of two CM elliptic curves with CM fields $F_1$ and $F_2$, the density is:

\[ \frac{1}{ [K F_1 F_2: K]}\]

\end{itemize}

In particular, the density is always $1$, $1/2$, or $1/4$.

\end{theo}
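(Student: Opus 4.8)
The plan is to combine Theorem~\ref{main} with the classification of Sato-Tate groups in \citep{fkrs}. Since the Sato-Tate group $\mathrm{ST}$ lies in $\mathrm{USp}(4)$, on which the similitude character is trivial, the function whose behaviour controls the density is just the trace of $\wedge^2$ of the standard representation; for $g \in \mathrm{USp}(4)$ with eigenvalues $e^{\pm i\theta_1}, e^{\pm i\theta_2}$ this trace is $2 + 4\cos\theta_1\cos\theta_2$, and in general it equals $\tfrac12((\tr g)^2 - \tr(g^2))$. By Theorem~\ref{main} the density of ordinary primes is the fraction of connected components of $G$ — equivalently of $\mathrm{ST}$, since \citep{fkrs} shows $G_{\overline{\mathbb Q}_\ell}$, hence its component group, is determined by $\mathrm{ST}$ — on which $\tr \wedge^2$ is non-constant. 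By \citep{fkrs} the identity component $\mathrm{ST}^0$ is one of $\mathrm{USp}(4)$, $\mathrm{SU}(2)\times \mathrm{SU}(2)$, the diagonally embedded $\mathrm{SU}(2)$, $U(1)\times \mathrm{SU}(2)$, $U(1)\times U(1)$, or a one-dimensional torus, and I would treat these six cases in turn, reading off the component group and the endomorphism data from the tables of \citep{fkrs} in each subcase.

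When $\mathrm{ST}^0$ is $\mathrm{USp}(4)$, $\mathrm{SU}(2)\times \mathrm{SU}(2)$, or the diagonal $\mathrm{SU}(2)$, the claim is that no component of $\mathrm{ST}$ has constant $\tr\wedge^2$, so the density is $1$; by \citep{fkrs} these are precisely the surfaces missing from the statement (the generic case, real or quaternionic multiplication, and surfaces isogenous to $E^2$ or to a product of two non-isogenous elliptic curves with $E$, or the factors, not CM). On the identity component this is Corollary~\ref{connected}. On the other components one restricts $\wedge^2 V$ to $\mathrm{ST}^0$ (it becomes a sum of two trivial summands and a four-dimensional irreducible piece, or three trivial summands and $\Sym^2$ of the standard representation of $\mathrm{SU}(2)$), and checks that on a coset $g\,\mathrm{ST}^0$ the eigenvalue multiset of $gn$, as $n$ runs through a maximal torus of $\mathrm{ST}^0$, takes the shape $\{\lambda, -\lambda, \bar\lambda, -\bar\lambda\}$ or $\{\mu u, \mu\bar u, \bar\mu u, \bar\mu\bar u\}$ with a free parameter, so that $\tr\wedge^2$ really varies.

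When $\mathrm{ST}^0$ is $U(1)\times \mathrm{SU}(2)$ or $U(1)\times U(1)$ the surface $A$ is isogenous to a product $E_1\times E_2$ with at least one factor CM, and here one may argue directly: from $H^1(A) = H^1(E_1)\oplus H^1(E_2)$ one gets $a_2 = 2p + a_1(E_1)a_1(E_2)$, so by the Weil bound for elliptic curves $A$ is non-ordinary at a good split prime $\p$ exactly when $E_1$ or $E_2$ is supersingular there; for a CM elliptic curve over $K$ the supersingular primes are those inert in $KF_i/K$, of density $1/2$ when $F_i\not\subseteq K$ and $0$ otherwise. Taking unions yields density $1$ or $1/2$ in the CM-by-non-CM case, and, for a product of two CM elliptic curves, ordinary primes are those split in $KF_1F_2/K$, of density $1/[KF_1F_2:K]$. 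To keep the argument uniform and avoid the density-zero statement for supersingular primes of a non-CM curve, one can instead apply Theorem~\ref{main}: on these identity components $\tr\wedge^2$ has the form $2 + (\text{character})\cdot(\text{character})$, which is constant precisely on the components disjoint from $\mathrm{ST}^0$, and a coset count over the $F$-type and $U(1)\times \mathrm{SU}(2)$-type entries of \citep{fkrs} gives the same answers.

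The remaining case $\mathrm{ST}^0 = U(1)$ is the crux: here $A$ is a CM abelian surface and I would use Theorem~\ref{main} directly. In this case $G$ is commutative, its component group is $\Gal(K^{\mathrm{conn}}/K)$ with $K^{\mathrm{conn}}$ the field $F$ over which all endomorphisms are defined, so $\#\pi_0(G) = [F:K]$, and $\Gal(F/K)$ acts through the automorphism group of $\mathrm{End}^0(A_{\overline K})$. The key point is that among the $[F:K]$ cosets of $\mathrm{ST}^0$ only the identity component carries a non-constant $\tr\wedge^2$: a nontrivial automorphism of the CM algebra, realized in $\mathrm{USp}(4)$, permutes the isotypic lines of $V_\ell$ so that every element of the corresponding coset has the same eigenvalue multiset of fourth roots of unity — for example $\{i,-i,i,-i\}$ with $\tr\wedge^2 = 2$, or a multiset with $\tr\wedge^2 = 0$ — independently of its $\mathrm{ST}^0$-component. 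Theorem~\ref{main} then gives density $1/[F:K]$; since a geometrically simple CM abelian surface has CM by a cyclic or non-Galois quartic CM field, whose automorphism group over $\mathbb Q$ is $C_4$ or $C_2$, while the cases with $\mathrm{End}^0(A_{\overline K})$ a product of two imaginary quadratic fields or a matrix algebra over one are products of CM elliptic curves handled above, we get $[F:K]\in\{1,2,4\}$ and density $1$, $1/2$, or $1/4$. I expect the main obstacle to be bookkeeping — matching each of the $52$ entries of \citep{fkrs} to its identity component, component group, and arithmetic invariants and checking the ratio of Theorem~\ref{main} against the claimed formula — rather than any single hard step; the one genuinely substantive input is the identification $\#\pi_0(G) = [F:K]$ in the CM case together with the claim that there the identity component is the unique non-constant one.
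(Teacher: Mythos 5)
Your overall strategy — apply Theorem~\ref{main}, pass to the Sato--Tate group where the similitude character is trivial so that $\tr(\wedge^2 V \otimes \chi^{-1})$ becomes $\tr \wedge^2 V$, and split into cases by the identity component $\mathrm{ST}^0$ — matches the paper's. But your case assignment contains a genuine error that leaves the crux unhandled.

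You assign the case $\mathrm{ST}^0 = U(1)$ to ``$A$ is a CM abelian surface,'' and you structure your argument for the $U(1)\times U(1)$ case around the decomposition $H^1(A) = H^1(E_1)\oplus H^1(E_2)$. This is backwards. In the FKRS classification, $\mathrm{ST}^0 = U(1)$ occurs precisely when $A$ is geometrically isogenous to $E^2$ for a single CM elliptic curve $E$, whereas the \emph{geometrically simple} CM abelian surfaces (CM by a primitive quartic CM field) live in the $\mathrm{ST}^0 = U(1)\times U(1)$ bucket, alongside products of two non-isogenous CM elliptic curves. For a simple CM abelian surface there is no decomposition $H^1(A) = H^1(E_1)\oplus H^1(E_2)$ and no $a_2 = 2p + a_1(E_1)a_1(E_2)$, so your direct supersingular-primes argument does not apply to that case. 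Conversely, the heuristic you offer for $\mathrm{ST}^0 = U(1)$ (automorphisms of a quartic CM algebra permuting eigenlines) is aimed at the wrong groups: the $U(1)$ component groups in FKRS are the finite groups $C_n, D_n, T, O, J(\cdot)$, and the surface there is treated most cleanly by noting that $E^2$ is ordinary iff $E$ is.

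The alternative you mention — applying Theorem~\ref{main} directly and asserting that $\tr\wedge^2$ is constant ``precisely on the components disjoint from $\mathrm{ST}^0$'' — is in fact the route the paper takes, but the paper explicitly flags that this assertion is \emph{not} automatic: there exist subgroups of $\mathrm{USp}(4)$ with identity component $U(1)\times U(1)$ that have non-identity components with non-constant trace. They simply never arise as Sato--Tate groups of abelian surfaces. So the claim must be verified against the actual $F$-type entries of the FKRS table, not deduced from a general isotypic-permutation argument. Your instinct that the substantive input is $\#\pi_0(G) = [F:K]$ together with ``only the identity component has non-constant trace'' is correct, but as written your proof neither locates the simple CM case in the right part of the classification nor supplies the needed component-by-component check there.
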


\begin{proof}  We can compute the density using Theorem \ref{main} in terms of the action of the $\ell$-adic monodromy group on $\wedge^2 V \otimes \chi^{-1}$  - it is the number of connected components on which the trace is not constant, divided by the number of connected components. This ratio is clearly preserved by extension of scalars from $\mathbb Q_\ell$ to $\overline{\mathbb Q}_\ell$ and from $\overline{\mathbb Q}_\ell$ to $\mathbb C$, and passage from an algebraic group $G/\mathbb C$ to the complex Lie group $G(\mathbb C)$. It is also preserved by passage from a reductive $G(\mathbb C)$ to a maximal compact subgroup $K$ over $\mathbb C$, because a maximal compact subgroup meets each component of a reductive group over $\mathbb C$ in a Zariski dense subset, the trace on a component of a maximal compact subgroup is constant if and only if the trace on the corresponding component of the complex group is constant.

The group called $G_{\ell}^{1,\Zar}$ in \citep[Definition 2.4]{fkrs} is the kernel of the similitude character from the $\ell$-adic monodromy group to $\mathbb G_m$. Because the $\ell$-adic monodromy group contains the scalars (by an argument of Deligne, \citep[2.3]{serre77}), it is equal to $G_{\ell}^{1,\Zar}$ times the group of scalars. Because the scalars act trivially on $\wedge^2 V \otimes \chi^{-1}$, we may as well work with $G_{\ell}^{1,\Zar}$. By \citep[Theorem 2.16]{fkrs}, $G_{\ell}^{1,\Zar}$ is the base change from $\mathbb Q$ to $\mathbb Q_\ell$ of a group $AST_A$, which when base changed to $\mathbb C$ has a maximal compact subgroup $ST_A$.  Because the ratio of Theorem \ref{main} is preserved by base change and by passage to maximal compact subgroups (as the $\ell$-adic monodromy group is reductive \citep[Theorem 3]{faltings86}), we may as well work with $ST_A$, the ``Sato-Tate group". 

The group $ST_A$ is classified in \citep[Theorem 4.2]{fkrs} as being one of 52 possible groups. The density of ordinary primes is the fraction of connected components with nonconstant trace on $\wedge^2 V$, for $V$ the restriction to $ST_A$ of the standard representation of $USP(4)$. This makes proving the theorem a process of checking each individual group, which can be split into cases according to the identity component of $ST_A$. The identity component is either $USP(4), SU(2)\times SU(2), SU(2), U(1) \times U(1), SU(2) \times U(1),$ or $U(1)$. A routine computation shows:

Case $USP(4), SU(2) \times SU(2), SU(2)$: In these three cases, the density is $1$. These cases occur when $A$ is a surface with endomorphism group $\mathbb Z$, a real multiplication surface, a quaternion multiplication surface, or is isogenous the product of two non-CM elliptic curves.

Case $U(1) \times U(1)$: In this case, the density is $1$ divided by the number of components. This case occurs when $A$ is isogenous to a simple CM abelian surface or a product of two non-isogenous CM curves. The number of components is equal to the degree of the field extension over which all endomorphisms are defined. In all cases of the \citep[Theorem 4.2]{fkrs} classification, this degree is either $1$, $2$, or $4$.

Note that there are some subgroups of $USP_4$ with identity component $U(1) \times U(1)$ that are never the monodromy group of an abelian surface, and these ones can have non-identity components with non-constant trace, so their density would not equal $1$ divided by the number of components. However, all the groups listed in \citep[Theorem 4.2]{fkrs} have just one component with non-constant trace.

Case $ SU(2) \times U(1)$: The density is $1$ divided by the number of components. This occurs when $A$ is isogenous to the product of a CM elliptic curve with a non-CM elliptic curve. The number of components is $1$ if the CM field is contained in $K$ and $2$ otherwise.

Case $U(1)$: This occurs when $A$ is geometrically isogenous to a product of two copies of the same CM elliptic curve. The surface is ordinary at a prime if and only if that curve is ordinary, so the density of ordinary primes is $1$ if the CM field is contained in $K$ and $1/2$ otherwise.

These facts are summarized by the statement of this theorem.

\end{proof}

\bibliographystyle{plainnat}

\bibliography{references}

\end{document}